\newtheorem{thm}{Theorem}
\newtheorem{proposition}[thm]{Proposition}
\newtheorem{lemma}[thm]{Lemma}
\newtheorem{definition}{Definition}
\newtheorem{assumption}{Assumption}
\title{On the Ball-Marsden-Slemrod obstruction for bilinear control systems}
\author{
\authorblockN{Nabile Boussa\"{i}d}
\authorblockA{Laboratoire de Math\'ematiques de Besan\c{c}on, UMR 6623 \\
 Universit\'e de Bourgogne Franche-Comt\'e, Besan\c{c}on, France\\
{\tt\small Nabile.Boussaid@univ-fcomte.fr}}
\authorblockN{Marco Caponigro}
\authorblockA{\'Equipe M2N\\
 Conservatoire National des Arts et M\'etiers, Paris, France\\
{\tt\small Marco.Caponigro@cnam.fr}}
\authorblockN{Thomas Chambrion}
\authorblockA{
Université de Lorraine, CNRS, Inria, IECL, Nancy, France\\
{\tt\small Thomas.Chambrion@univ-lorraine.fr}}
}
\begin{document}

\maketitle
\thispagestyle{empty}
\pagestyle{empty}

\begin{abstract}
In this paper we present an extension to the case of $L^1$-controls of a famous result by Ball--Marsden--Slemrod on the obstruction to the controllability of bilinear control systems in infinite dimensional spaces. 
\end{abstract}

 \section{INTRODUCTION}

\subsection{Bilinear control systems}

Let $X$ be a Banach space, $A:D(A)\to X$ a linear operator in $X$ with domain $D(A)$, $B:X\to X$ a linear bounded operator and $\psi_0$ an element in $X$. 

We consider a following bilinear control system on $X$
\begin{equation}\label{EQ_main}
\begin{cases}
 \dot \psi(t) &= A\psi(t) + u(t) B \psi(t),\\
\psi(0)&= \psi_0,
\end{cases}
\end{equation}
where  $u:[0,+\infty)\to \mathbf{R}$ is a scalar function representing the control.
\begin{assumption}\label{ASS_base}
The pair $(A,B)$ of linear operators in $X$ satisfies
\begin{enumerate}
\item[$1)$] the operator $A$ generates a $C^0$-semigroup of linear bounded operators on $X$.  \label{ASS_C0_semi_group}
\item[$2)$] the operator $B$ is bounded. \label{ASS_B_bounded}
\end{enumerate}
\end{assumption}

\begin{definition}
Let $(A,B)$ satisfy Assumption \ref{ASS_base} and let $T>0$. A function $\psi:[0,T]\to X$ is a \emph{mild solution} of (\ref{EQ_main}) if for every $t$ in $[0,T]$, 
\begin{equation}\label{EQ_Duhamel}
\psi(t)=e^{tA}\psi_0 +\int_{0}^t e^{(t-s)A} B \psi(s) u(s) \mathrm{d}s
\end{equation}
\end{definition}

Equation (\ref{EQ_Duhamel}) is often called Duhamel formula.  Existence and uniqueness {of mild solutions} for equation~\eqref{EQ_main} is given by the following result (see, for instance, Proposition 2.1 and Remark 2.7 in \cite{bms}).
\begin{proposition}
Assume that $(A,B)$ satisfies Assumption \ref{ASS_base}. Then, for every $
\psi_0$ in $X$, for every $u$ in $L^1_{loc}([0,+\infty), \mathbf{R})$, there 
exists a unique mild solution $t\mapsto \Upsilon^u_{t,0} \psi_0$ to the 
Cauchy problem (\ref{EQ_main}). 
Moreover, for every $\psi_0$ in $X$, the end-point mapping $\Upsilon_{\cdot,0}\psi_0:[0,+\infty) \times L^1_{loc}([0,+\infty),\mathbf{R}) \to X$ is continuous. 
\end{proposition}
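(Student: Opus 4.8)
The plan is to obtain existence and uniqueness through a contraction-mapping argument applied to the Duhamel integral operator, and then to deduce continuity of the end-point map from Gronwall-type estimates. Fix $T>0$ and let $M\geq 1$, $\omega\in\mathbf{R}$ be such that $\|e^{tA}\|\leq M e^{\omega t}$ for all $t\geq 0$, which exist because $A$ generates a $C^0$-semigroup. On the Banach space $C([0,T],X)$ define the operator $\Phi$ by
$$(\Phi\psi)(t)=e^{tA}\psi_0+\int_0^t e^{(t-s)A}B\psi(s)u(s)\,\mathrm{d}s,$$
so that a mild solution on $[0,T]$ is exactly a fixed point of $\Phi$. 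First I would check that $\Phi$ is well defined, i.e. that $\Phi\psi$ is continuous: the map $s\mapsto B\psi(s)u(s)$ belongs to $L^1([0,T],X)$ since $\psi$ is bounded, $B$ is bounded and $u\in L^1$, and the continuity of $t\mapsto\int_0^t e^{(t-s)A}B\psi(s)u(s)\,\mathrm{d}s$ follows from the strong continuity of the semigroup together with dominated convergence and the absolute continuity of the integral.

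The core estimate uses the nondecreasing absolutely continuous function $U(t)=\int_0^t|u(s)|\,\mathrm{d}s$. Setting $C=M e^{|\omega|T}\|B\|$, for $\psi_1,\psi_2\in C([0,T],X)$ one has
$$\|(\Phi\psi_1)(t)-(\Phi\psi_2)(t)\|\leq C\int_0^t|u(s)|\,\|\psi_1(s)-\psi_2(s)\|\,\mathrm{d}s,$$
and, iterating and using $U'=|u|$ almost everywhere, an induction yields
$$\|(\Phi^n\psi_1)(t)-(\Phi^n\psi_2)(t)\|\leq\frac{\bigl(C\,U(T)\bigr)^n}{n!}\,\|\psi_1-\psi_2\|_\infty.$$
Here is the point where the $L^1$ regularity of $u$ is decisive: unlike the case of bounded controls, $\Phi$ itself need not be a contraction, but the factorial in the denominator guarantees that $\Phi^n$ is a contraction for $n$ large, and the Banach fixed-point theorem applied to iterates provides a unique fixed point in $C([0,T],X)$. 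Since $T$ is arbitrary, this yields the unique mild solution $t\mapsto\Upsilon^u_{t,0}\psi_0$ on $[0,+\infty)$, continuous in $t$ by construction.

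For the continuity of the end-point map I would first establish, via the integral Gronwall inequality with the $L^1$ kernel $|u|$, the a priori bound $\|\Upsilon^u_{t,0}\psi_0\|\leq M e^{|\omega|T}\|\psi_0\|\exp\bigl(C\,U(T)\bigr)$ on $[0,T]$. This bound is uniform over controls with bounded $L^1$-norm, so for a sequence $u_n\to u$ in $L^1_{loc}$ the corresponding solutions $\psi_n$ satisfy $\|\psi_n\|_\infty\leq R$ with $R$ independent of $n$. Splitting $\psi_n u_n-\psi u=\psi_n(u_n-u)+(\psi_n-\psi)u$ in the Duhamel formula and applying Gronwall once more gives
$$\|\psi_n(t)-\psi(t)\|\leq C R\,\|u_n-u\|_{L^1([0,T])}\exp\bigl(C\,U(T)\bigr),$$
so $\psi_n\to\psi$ uniformly on $[0,T]$. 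Joint continuity of $(t,u)\mapsto\Upsilon^u_{t,0}\psi_0$ then follows from $\|\Upsilon^{u_n}_{t_n,0}\psi_0-\Upsilon^u_{t,0}\psi_0\|\leq\|\psi_n(t_n)-\psi(t_n)\|+\|\psi(t_n)-\psi(t)\|$ by letting $(t_n,u_n)\to(t,u)$, the first term being controlled by uniform convergence on a fixed compact time interval and the second by continuity in time of $\psi$. The main obstacle throughout is precisely the merely integrable dependence on the control: it forces the replacement of a direct contraction by the iterate/Gronwall machinery built on $U(t)=\int_0^t|u|$, and requires care that every estimate depend on $u$ only through its $L^1$-norm so that one may pass to the limit.
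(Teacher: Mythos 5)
Your proof is correct; note that the paper itself does not prove this proposition but defers to Proposition 2.1 and Remark 2.7 of the cited Ball--Marsden--Slemrod reference, whose argument is essentially the one you give. The fixed-point iteration on the Duhamel operator (with the factorial gain from $U(t)=\int_0^t|u|$ making $\Phi^n$ a contraction) together with the $L^1$-kernel Gronwall estimates for the a priori bound and for continuity in $(t,u)$ is exactly the standard route, so there is nothing to flag.
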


\begin{definition}
Assume that $(A,B)$ satisfies Assumption \ref{ASS_base} and let $\mathcal{U}$ be a subset of $L^1_{loc}([0,+\infty),\mathbf{R})$. For every $\psi_0$ in $X$, the \emph{attainable set from $\psi_0$ with controls in $\mathcal{U}$} is defined as
$$
\mathcal{A}(\psi_0,\mathcal{U})=\bigcup_{T \geq 0} \bigcup_{u \in \mathcal{U}} \{ \Upsilon^u_{T,0} \psi_0 \}.
$$
\end{definition}

Our main result is the following property of the attainable set of system (\ref{EQ_main}) with $L^1$ controls.
\begin{thm}\label{THM_main_result}
Assume that $(A,B)$ satisfies Assumption~\ref{ASS_base}. Then, for every $\psi_0$ in $X$, the attainable set 
$\mathcal{A}(\psi_0,L^1_{loc}([0,+\infty),\mathbf{R}))$
from $\psi_0$ with $L^1_{loc}$ controls is contained in a countable union of compacts subsets of $X$.
\end{thm}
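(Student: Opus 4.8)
The plan is to exhibit an explicit countable family of compact sets covering the attainable set, and the first move is a reduction by cutting along time and control norm. For $N,R\in\mathbf{N}$ set
\[
\tilde S(N,R)=\Big\{\Upsilon^u_{t,0}\psi_0 \ :\ 0\le t\le N,\ \|u\|_{L^1([0,t],\mathbf{R})}\le R\Big\}.
\]
Every pair $(t,u)$ with $u\in L^1_{loc}$ satisfies $t\le N$ and $\|u\|_{L^1([0,t])}\le R$ for suitable integers $N,R$, so $\mathcal{A}(\psi_0,L^1_{loc}([0,+\infty),\mathbf{R}))\subseteq\bigcup_{N,R\in\mathbf{N}}\tilde S(N,R)$ and it suffices to prove that each $\tilde S(N,R)$ is relatively compact. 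The naive route --- extracting weakly convergent subsequences from the $L^1$-ball of controls and passing to the limit in the Duhamel formula --- is exactly what breaks down here, since norm-bounded sets in $L^1$ fail to be weakly compact (Dunford--Pettis). I would therefore avoid the control space topology altogether and work with the Dyson (Volterra) expansion of the solution.

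Iterating the Duhamel formula~\eqref{EQ_Duhamel} and using that $B$ is bounded yields, for $\|u\|_{L^1([0,t])}\le R$ and $t\le N$,
\[
\Upsilon^u_{t,0}\psi_0=\sum_{n\ge 0} T_n(t,u),\qquad T_n(t,u)=\int_{\Delta_n(t)} G_n(t,s)\,u(s_1)\cdots u(s_n)\,\mathrm{d}s,
\]
where $\Delta_n(t)=\{0\le s_1\le\cdots\le s_n\le t\}$ and $G_n(t,s)=e^{(t-s_n)A}B e^{(s_n-s_{n-1})A}B\cdots B e^{s_1A}\psi_0$. With $M_N=\sup_{r\in[0,N]}\|e^{rA}\|$, finite by the $C^0$-semigroup bound, the simplex estimate $\int_{\Delta_n(t)}|u(s_1)\cdots u(s_n)|\,\mathrm{d}s\le \frac{1}{n!}\|u\|_{L^1([0,t])}^n$ gives $\|T_n(t,u)\|\le \frac{M_N^{\,n+1}\|B\|^n\|\psi_0\|}{n!}R^n$, so the series converges in $X$ uniformly over $\tilde S(N,R)$. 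Consequently its tails are uniformly small, and total boundedness of $\tilde S(N,R)$ reduces to that of the partial-sum sets $\{\sum_{n=0}^{m}T_n(t,u)\}\subseteq \sum_{n=0}^{m}\mathcal{T}_n$, where $\mathcal{T}_n:=\{T_n(t,u):0\le t\le N,\ \|u\|_{L^1([0,t])}\le R\}$. Since a finite Minkowski sum of relatively compact sets is relatively compact, everything reduces to the relative compactness of each single-term set $\mathcal{T}_n$.

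The core step is to read each $\mathcal{T}_n$ as a set of vector integrals of a fixed compact-valued kernel. Joint strong continuity of $(r,x)\mapsto e^{rA}x$ together with boundedness of $B$ shows, by induction on $n$, that $G_n$ is a continuous $X$-valued map on the compact set $\hat\Delta_n=\{(t,s):0\le s_1\le\cdots\le s_n\le t\le N\}$; hence $K_n:=G_n(\hat\Delta_n)$ is a compact subset of $X$. Writing $\mathrm{d}\nu_{t,u}=u^{\otimes n}\mathbf{1}_{\Delta_n(t)}\,\mathrm{d}s$, a finite signed measure with $\|\nu_{t,u}\|_{TV}\le R^n/n!$, we have $T_n(t,u)=\int G_n(t,\cdot)\,\mathrm{d}\nu_{t,u}$. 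Approximating this Bochner integral in norm by Riemann--Stieltjes sums $\sum_i G_n(t,\sigma_i)\,\nu_{t,u}(E_i)$ with $G_n(t,\sigma_i)\in K_n$ and $\sum_i|\nu_{t,u}(E_i)|\le R^n/n!$, we conclude $\mathcal{T}_n\subseteq \frac{R^n}{n!}\,\overline{\operatorname{absconv}}(K_n)$. The decisive fact, and what I expect to be the real obstacle to get right, is that in a Banach space the closed absolutely convex hull of a compact set is compact (Mazur's theorem applied to the compact set $\{\lambda x:|\lambda|\le 1,\ x\in K_n\}$); this is precisely what replaces the failed weak-compactness argument. It follows that each $\mathcal{T}_n$ is relatively compact, hence so is $\tilde S(N,R)$, and the theorem follows.
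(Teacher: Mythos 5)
Your proof is correct, and its skeleton coincides with the paper's: the same countable cut along time and $L^1$-norm, the same Dyson expansion with the factorial tail estimate making the tails uniformly small, and the same reduction via finite Minkowski sums of totally bounded sets to the compactness of each single-order term set. Where you genuinely diverge is in the key compactness lemma. The paper (Lemma~\ref{PRO_calWj_totally_bounded}) proves total boundedness of $\mathcal{W}_j^{T,K}$ by induction on $j$, using the recursive form of $W_j$ and an explicitly constructed partition of unity (Proposition~\ref{PRO_partition_unity}) to approximate the integrand $e^{(t-s)A}BW_j(s,u)\psi_0$ by a finite combination $\sum_i \phi_i(\cdot)\,x_i$, so that after integration against $u$ the result lands $2\delta K$-close to a compact set of the form $\sum_i [0,K]\,x_i$. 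You instead unroll the $n$-th term as an integral of the fixed continuous kernel $G_n$ over the compact simplex $\hat\Delta_n$ against the signed measure $u^{\otimes n}\mathbf{1}_{\Delta_n(t)}\,\mathrm{d}s$ of total variation at most $R^n/n!$, conclude $\mathcal{T}_n \subseteq \frac{R^n}{n!}\,\overline{\mathrm{absconv}}\,(G_n(\hat\Delta_n))$, and invoke Mazur's theorem that the closed (absolutely) convex hull of a compact set in a Banach space is compact. Your route is shorter and isolates the structural reason compactness survives for $L^1$ controls; its cost is reliance on Mazur's theorem, whose standard proof (the convex hull of a totally bounded set is totally bounded) is essentially the $\varepsilon$-net bookkeeping the paper carries out by hand — so the paper's partition-of-unity lemma can be read as an inlined, self-contained proof of exactly the special case of Mazur that you use as a black box, keeping the paper elementary at the price of a longer induction. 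A minor side benefit of your formulation: the absolutely convex hull handles signed controls transparently, whereas the coefficients $\int_0^t \phi_i(\cdot)u(s)\,\mathrm{d}s$ in the paper's argument lie in $[-K,K]$ rather than $[0,K]$, so the paper's compact set should really be $\sum_i [-K,K]\,x_i$ — a harmless slip that your approach avoids by design.
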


\subsection{The Ball--Marsden--Slemrod obstruction}
Our main result, Theorem \ref{THM_main_result} is an extension of the well-known Ball--Marsden--Slemrod obstruction to controllability (see also \cite{Illner}) which is as follows.  
\begin{thm}[Theorem 3.6 in \cite{bms}]\label{THM_BMS}
Assume that $(A,B)$ satisfies Assumption~\ref{ASS_base}. Then, for every $\psi_0$ in $X$, the attainable set $\mathcal{A}(\psi_0,\cup_{r>1}L^r_{loc}([0,+\infty),\mathbf{R}))$
from $\psi_0$ with $L^r_{loc}$ controls, $r>1$, is contained in a countable union of compacts subsets of $X$.
\end{thm}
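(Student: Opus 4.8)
The plan is to reduce the statement to the relative compactness of a \emph{countable} family of sub-attainable sets, and then to establish that, when $r>1$, weak convergence of controls forces \emph{strong} convergence of the corresponding mild solutions. First I would introduce, for $r>1$, $R>0$ and $T>0$,
\[
\mathcal{K}(r,R,T)=\{\Upsilon^u_{t,0}\psi_0 : 0\le t\le T,\ u\in L^r([0,T],\mathbf{R}),\ \|u\|_{L^r}\le R\},
\]
and verify the elementary covering
\[
\mathcal{A}\bigl(\psi_0,\textstyle\bigcup_{r>1}L^r_{loc}\bigr)\subseteq\bigcup_{n,p,q\in\mathbf{N}}\mathcal{K}\bigl(1+\tfrac1n,\,p,\,q\bigr).
\]
Indeed, any attainable point is $\Upsilon^v_{S,0}\psi_0$ with $v\in L^r([0,S])$ for some $r>1$; extending $v$ by zero past $S$, using the bounded-interval inclusion $L^{r}\subset L^{1+1/n}$ whenever $1+1/n\le r$, and choosing $q\ge S$ and $p\ge\|v\|_{L^{1+1/n}([0,q])}$ places it in one of these sets. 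Since the right-hand side is a countable union, it suffices to prove that each $\mathcal{K}(r,R,T)$ is relatively compact in $X$.

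The heart of the argument is the following \emph{weak-to-strong} continuity property: if $u_k\rightharpoonup u$ weakly in $L^r([0,T])$ with $r>1$, then $\Upsilon^{u_k}_{t,0}\psi_0\to\Upsilon^u_{t,0}\psi_0$ strongly in $X$, \emph{uniformly} in $t\in[0,T]$. Granting this, I finish the relative compactness as follows. Take a sequence $\Upsilon^{u_k}_{t_k,0}\psi_0$ in $\mathcal{K}(r,R,T)$, so $\|u_k\|_{L^r}\le R$ and $t_k\in[0,T]$. Since $1<r<\infty$, the space $L^r([0,T])$ is reflexive, hence the bounded sequence $(u_k)$ admits a weakly convergent subsequence $u_k\rightharpoonup u$, while $(t_k)$ admits a convergent subsequence $t_k\to t^\ast$. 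Combining the uniform-in-$t$ weak-to-strong property with the continuity of $t\mapsto\Upsilon^u_{t,0}\psi_0$ at fixed control (Duhamel formula) yields $\Upsilon^{u_k}_{t_k,0}\psi_0\to\Upsilon^u_{t^\ast,0}\psi_0$; thus every sequence in $\mathcal{K}(r,R,T)$ has a convergent subsequence.

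To prove the weak-to-strong property I would subtract the two Duhamel formulas and split the bilinear term. Writing $\psi^k(t)=\Upsilon^{u_k}_{t,0}\psi_0$, $\psi(t)=\Upsilon^u_{t,0}\psi_0$ and $\delta_k(t)=\|\psi^k(t)-\psi(t)\|$, one obtains
\[
\psi^k(t)-\psi(t)=\int_0^t e^{(t-s)A}B\bigl[\psi^k(s)-\psi(s)\bigr]u_k(s)\,ds+R_k(t),\qquad R_k(t)=\int_0^t e^{(t-s)A}B\psi(s)\,\bigl(u_k-u\bigr)(s)\,ds.
\]
The source term $R_k(t)=\int_0^t g(t,s)\,(u_k-u)(s)\,ds$ has the \emph{fixed} continuous $X$-valued kernel $g(t,s)=e^{(t-s)A}B\psi(s)$, and the claim is that $\sup_{t\in[0,T]}\|R_k(t)\|\to0$. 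This I would get by approximating $s\mapsto g(t,s)$ uniformly by $X$-valued step functions: on step functions $R_k(t)$ becomes a finite combination of the scalars $\int_{a_j}^{a_{j+1}}(u_k-u)\to0$ (pairing $u_k-u\rightharpoonup0$ against indicators), while the approximation error is controlled by the uniform bound $\|u_k-u\|_{L^1([0,T])}\le 2R\,T^{1/r'}$; Hölder equicontinuity of $t\mapsto R_k(t)$ then upgrades this pointwise convergence to uniform convergence in $t$. With $M=\sup_{s\in[0,T]}\|e^{sA}\|$, the remaining integral is bounded by $M\|B\|\int_0^t\delta_k(s)|u_k(s)|\,ds$, so Gr\"onwall's inequality gives $\delta_k(t)\le\bigl(\sup_\tau\|R_k(\tau)\|\bigr)\exp\bigl(M\|B\|\,\|u_k\|_{L^1}\bigr)$, and the uniform $L^1$ bound $\|u_k\|_{L^1([0,T])}\le R\,T^{1/r'}$ forces $\sup_t\delta_k(t)\to0$.

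The main obstacle is precisely this source-term estimate, and it is here that the hypothesis $r>1$ is indispensable. It is used twice: to guarantee weak sequential compactness of the $L^r$-balls (reflexivity fails at $r=1$), and, through H\"older's inequality with \emph{finite} conjugate exponent $r'$, to produce both the uniform $L^1$ bounds and the equicontinuity of the primitives $t\mapsto\int_0^t u_k$ that promote the weak pairing-against-indicators convergence to strong, uniform-in-$t$ convergence of $R_k$. For $r=1$ both mechanisms collapse ($r'=\infty$, and balls of $L^1$ are not weakly compact), which is exactly why the sharper statement of Theorem~\ref{THM_main_result} demands a genuinely different argument.
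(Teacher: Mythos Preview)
Your proposal is correct and follows essentially the same approach that the paper attributes to \cite{bms}: the countable decomposition into sets $\mathcal{K}(1+\tfrac1n,p,q)$ matches the paper's $\mathcal{A}^{l,m,k}$, and your weak-to-strong continuity statement is precisely the ``hard step'' the paper identifies as Lemma~3.7 of \cite{bms}. Your Duhamel-subtraction/Gr\"onwall argument, with the source term $R_k$ handled by step-function approximation of the $X$-valued kernel, is a standard and valid way to carry out that lemma; you also correctly isolate where $r>1$ enters (reflexivity for weak sequential compactness, and the finite conjugate exponent for the uniform $L^1$ bounds and equicontinuity), in agreement with the paper's discussion of why this strategy breaks down at $r=1$.
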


A consequence of Theorem~\ref{THM_BMS} to the framework of the conservative bilinear Schr\"odinger equation is given by Turinici.
\begin{thm}[Theorem~1 in \cite{turinici}]\label{THM_Turinici}
Assume that $(A,B)$ satisfies Assumption \ref{ASS_base}. Then, for every $\psi_0$ in $X$, the set $\cup_{\alpha >0} \alpha\mathcal{A}(\psi_0,\cup_{r>1}L^r_{loc}([0,+\infty),\mathbf{R}))$ is contained in a countable union of compacts subsets of $X$.
\end{thm}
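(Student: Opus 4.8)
The plan is to deduce this statement directly from the Ball--Marsden--Slemrod obstruction, Theorem~\ref{THM_BMS}, by observing that forming all positive-scalar multiples of a set that is a countable union of compacts again yields a set of the same type. There is no genuinely hard analytic step here; the argument is a soft topological one, and the whole point is to keep the dilation factor under control.

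First I would invoke Theorem~\ref{THM_BMS} for the fixed initial datum $\psi_0$: there is a countable family of compact sets $(K_n)_{n\in\mathbf N}$ such that
\[
\mathcal A\big(\psi_0,\textstyle\bigcup_{r>1}L^r_{loc}([0,+\infty),\mathbf R)\big)\subseteq\bigcup_{n\in\mathbf N}K_n .
\]
Denote by $\mathcal A$ the set on the left. Distributing the dilations over the union gives
\[
\bigcup_{\alpha>0}\alpha\,\mathcal A\ \subseteq\ \bigcup_{n\in\mathbf N}\ \bigcup_{\alpha>0}\alpha K_n ,
\]
so it suffices to prove that, for each fixed compact $K\subseteq X$, the positive cone $\bigcup_{\alpha>0}\alpha K$ is itself contained in a countable union of compact sets.

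The key point is that the open half-line $(0,+\infty)$ is $\sigma$-compact, $(0,+\infty)=\bigcup_{m\in\mathbf N}[1/m,m]$, so that
\[
\bigcup_{\alpha>0}\alpha K=\bigcup_{m\in\mathbf N}\big\{\alpha x:\ \alpha\in[1/m,m],\ x\in K\big\}=\bigcup_{m\in\mathbf N}s\big([1/m,m]\times K\big),
\]
where $s:\mathbf R\times X\to X$, $s(\alpha,x)=\alpha x$, is scalar multiplication. For each $m$ the set $[1/m,m]\times K$ is compact, being a product of compact sets, so its image under the continuous map $s$ is compact. Hence $\bigcup_{\alpha>0}\alpha K$ is a countable union of compacts, and combining this with the previous display shows that $\bigcup_{\alpha>0}\alpha\mathcal A\subseteq\bigcup_{n,m\in\mathbf N}s\big([1/m,m]\times K_n\big)$ is contained in a (doubly indexed, hence countable) union of compact subsets of $X$.

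The only thing that needs checking is the continuity of $s$, which is immediate from $\|\alpha x-\alpha_0x_0\|\le|\alpha|\,\|x-x_0\|+|\alpha-\alpha_0|\,\|x_0\|$, together with the exhaustion of $(0,+\infty)$ by the compact intervals $[1/m,m]$: this is exactly what prevents the scaling factor from blowing up or collapsing to $0$ within a single compact piece, which would otherwise destroy compactness. I would finally remark that the identical argument applies with $\bigcup_{r>1}L^r_{loc}$ replaced by $L^1_{loc}$, so that the same scaled conclusion follows for the attainable set of Theorem~\ref{THM_main_result}.
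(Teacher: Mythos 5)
Your proof is correct, but note that the paper itself contains no proof of Theorem~\ref{THM_Turinici}: it is quoted as Theorem~1 of \cite{turinici}, and the surrounding text merely describes it as ``a consequence of Theorem~\ref{THM_BMS}.'' Your argument supplies exactly that deduction, and each step holds up: the identity $\bigcup_{\alpha>0}\alpha K=\bigcup_{m\in\mathbf N}s\bigl([1/m,m]\times K\bigr)$ is exact (every $\alpha>0$ lies in some $[1/m,m]$, and conversely), each $s\bigl([1/m,m]\times K_n\bigr)$ is compact as the continuous image of a product of compacts, and distributing the dilations over the countable family $(K_n)_{n\in\mathbf N}$ furnished by Theorem~\ref{THM_BMS} yields a doubly indexed, hence countable, family of compacts covering $\bigcup_{\alpha>0}\alpha\,\mathcal A$. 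The exhaustion of $(0,+\infty)$ by the intervals $[1/m,m]$ is indeed the one point requiring care, since allowing $\alpha\to 0$ or $\alpha\to\infty$ inside a single piece would destroy compactness; you handle it correctly. Your closing remark is also a genuine (if easy) addition: since Theorem~\ref{THM_main_result} has the same form as Theorem~\ref{THM_BMS}, the identical scaling argument produces an $L^1_{loc}$ analogue of Theorem~\ref{THM_Turinici}, a corollary the paper does not state explicitly.
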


Theorems \ref{THM_main_result} and \ref{THM_BMS} are basically empty in the case in which $X$ is finite dimensional, { since,} in this case, $X$ itself is a countable union of compact sets. On the other hand,  when $X$ is infinite dimensional,  these results represent a strong topological obstruction to the exact controllability. Indeed, compact subsets of an infinite dimensional Banach space have empty interiors and so is a countable union of closed subsets with empty interiors (as a consequence of Baire Theorem).

Whether the non-controllability result of Ball, Marsden, and Slemrod, 
Theorem~\ref{THM_BMS} holds for $L^1$ control has been an open question for 
decades. Indeed, the  proof of Theorem~\ref{THM_BMS} does not apply 
directly  to the $L^1$ case. To see what fails let us briefly recall  
the method used in~\cite{bms} for the proof of Theorem~\ref{THM_BMS}. The first 
step is to write
\begin{align*}
\lefteqn{\mathcal{A}(\psi_0,\cup_{r>1}L^r_{loc}([0,+\infty),\mathbf{R}))}\\
&~~~~=\bigcup_{T \geq 0} \bigcup_{r>1} \bigcup_{u \in L^r([0,T],\mathbf{R})} \{ \Upsilon^u_{T,0} \psi_0 \} \\
&~~~~= \bigcup_{l \in \mathbf{N}} \bigcup_{m \in \mathbf{N}} \bigcup_{k \in \mathbf{N}} \bigcup_{0\leq t \leq l } \left (\bigcup_{\|u\|_{L^{1+1/m}}\leq k}
\{ \Upsilon^u_{t,0} \psi_0 \} \right).
\end{align*}
Hence it is sufficient to prove that, for every $(l,m,k)$ in $\mathbf{N}^3$, the set
$$
\mathcal{A}^{l,m,k}= \bigcup_{0\leq t \leq l } \left (\bigcup_{\|u\|_{L^{1+1/m}}\leq k}
\{ \Upsilon^u_{t,0} \psi_0 \} \right )
$$
has compact closure in $X$. To this end, one considers a sequence $(\psi_n)_{n\in \mathbf{N}}$  in $\mathcal{A}^{l,m,k}$, associated with a sequence of times $(t_n)_{n\in \mathbf{N}}$ in $[0,l]$ and a sequence of controls $(u_n)_{n \in \mathbf{N}}$ in the ball of radius $k$ of $L^{1+1/m}([0,+\infty), \mathbf{R})$. By compactness of $[0,l]$, up to extraction, one can assume that $(t_n)_{n\in \mathbf{N}}$ tends to $t_\infty$ in $[0,l]$. By Banach--Alaoglu--Bourbaki Theorem, the balls of $L^{1+1/m}([0,+\infty), \mathbf{R})$ are weakly (sequentially)
compact and, hence, up to extraction, one can assume that $(u_n)_{n\in \mathbf{N}}$ converges weakly in $L^{1+1/m}([0,+\infty), \mathbf{R})$ to some $u_\infty$. 
The hard step of the proof (Lemma 3.7 in \cite{bms}) is then to show that $\Upsilon^{u_n}_{t_n,0}\psi_0$ tends to $\Upsilon^{u_\infty}_{t_\infty,0}{\psi_0}$ as $n$ tends to infinity. 

A crucial point in the proof of Theorem~\ref{THM_BMS} given in~\cite{bms} is 
the fact that the closed balls of $L^p$, $p>1$ are weakly sequentially 
compact.  This is no longer true for the balls of $L^1$, and this prevents a 
direct extension of the proof of Theorem \ref{THM_BMS} to
Theorem~\ref{THM_main_result}.  Here we present a brief and self-contained proof of Theorem~\ref{THM_main_result} mainly based on Dyson expansions and basics compacteness properties on Banach spaces. 

An alternative proof of Theorem~\ref{THM_BMS},  not rely{ing} on the reflectiveness 
of the set of admissible controls,  has been recently given in~\cite{UP}. The proof 
applies to  a very large class  of controls (namely, Radon measures) which 
contains locally integrable  functions, and for its generality it is technically 
quite involved,
in contrast with the simplicity of the underlying ideas. It applies also, with 
minor modifications, to nonlinear problems~\cite{chambrion:hal-01876952}. 


\subsection{Content}
In this note we present a simple proof of Theorem 
\ref{THM_main_result}. However, historical reasons have made different 
communities  use incompatible terminologies and, in order to avoid ambiguities, 
we present in Section~\ref{SEC_basic_facts} a quick reminder of basic facts in 
Banach topologies. Section~\ref{SEC_Dyson} gives a short introduction to the 
classical Dyson expansion (Section~\ref{SEC_Dyson_definition}) and the proof of 
an instrumental compactness property (Section~\ref{SEC_Dyson_compact}). We 
conclude in Section~\ref{SEC_proof} with the proof of Theorem~
\ref{THM_main_result}.  

 \section{BASIC FACTS ABOUT TOPOLOGY IN BANACH SPACES}\label{SEC_basic_facts}
\subsection{Notations} \label{SEC_notation}

The Banach space $X$ is endowed with norm $\| \cdot \|$. For every $\psi_c$ in $X$ and every $r>0$, $B_X(\psi_c,r)$ denotes the ball of center $\psi_c$ and of radius $r$:
$$
B_X(\psi_c,r)=\{ \psi \in X | \| \psi-\psi_c\| < r\}. 
$$

In the following, all we need to know about generators of $C^0$-semigroup is the classical result  stated in Proposition \ref{PRO_Hille_Yosida} (see  Chapter VII of \cite{Hille_Phillips}).
\begin{proposition}\label{PRO_Hille_Yosida}
Assume that $A$ generates a $C^0$-semigroup. Then
 there exist $M,\omega>0$ such that $\|e^{At}\|\leq M e^{\omega t}$ for every $t\geq 0$.
\end{proposition}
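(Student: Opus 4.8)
The plan is to derive the exponential bound from the two defining features of a $C^0$-semigroup: the semigroup law $e^{A(t+s)}=e^{At}e^{As}$ together with $e^{A\cdot 0}=\mathrm{Id}$, and the strong continuity of the orbit map $t\mapsto e^{At}\psi$ for each fixed $\psi$ in $X$. The argument naturally splits into a local step (uniform boundedness of the operator norm near $t=0$) and a global step (propagation of that bound to all $t\geq 0$ via the semigroup law).

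For the local step, I would fix $\psi$ in $X$ and note that the orbit map $t\mapsto e^{At}\psi$ is continuous on the compact interval $[0,1]$, so its image is bounded; that is, $\sup_{t\in[0,1]}\|e^{At}\psi\|<\infty$ for every $\psi$. This says exactly that the family of bounded operators $\{e^{At}:t\in[0,1]\}$ is pointwise bounded. Since $X$ is complete, the uniform boundedness principle (Banach--Steinhaus) upgrades pointwise boundedness to a uniform bound $M:=\sup_{t\in[0,1]}\|e^{At}\|<\infty$, and one has $M\geq\|e^{A\cdot 0}\|=1$.

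For the global step, I would write an arbitrary $t\geq 0$ as $t=n+r$ with $n=\lfloor t\rfloor$ in $\mathbf{N}$ and $r$ in $[0,1)$, and factor $e^{At}=(e^{A})^{n}e^{Ar}$ using the semigroup law. Submultiplicativity of the operator norm then gives $\|e^{At}\|\leq\|e^{A}\|^{n}\|e^{Ar}\|\leq M^{n+1}$. Setting $\omega:=\log M\geq 0$, one has $M^{n+1}=Me^{n\omega}\leq Me^{\omega t}$ because $n\leq t$ and $\omega\geq 0$; in the degenerate case $M=1$ one simply replaces $\omega$ by any positive number, which only weakens the inequality. This produces the claimed bound $\|e^{At}\|\leq Me^{\omega t}$ for all $t\geq 0$ with $\omega>0$.

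The only nontrivial ingredient is the uniform boundedness principle invoked in the local step: pointwise boundedness of the orbit maps does not by itself yield a uniform bound on the operator norms, and it is precisely the completeness of the Banach space $X$ that legitimizes this passage. Once local uniform boundedness is secured, the remainder is elementary algebra of the semigroup together with the choice $\omega=\log M$, so I expect the Banach--Steinhaus step to be the conceptual crux of the proof.
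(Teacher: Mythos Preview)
Your argument is the standard and correct proof of this classical growth bound: the local step via Banach--Steinhaus and the global step via the semigroup law and the decomposition $t=\lfloor t\rfloor+r$ are exactly right, and the treatment of the edge case $M=1$ is fine.

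There is, however, nothing to compare against: the paper does not supply a proof of Proposition~\ref{PRO_Hille_Yosida}. It is stated as a classical fact with a reference to Hille--Phillips (Chapter~VII of \cite{Hille_Phillips}) and is used only as a black box in the rest of the note. Your write-up therefore goes beyond what the paper provides, and is correct.
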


\subsection{Compact subset of Banach spaces}

\begin{definition}
Let $X$ be a Banach space and $Y$ be a subset of $X$. A family $(O_i)_{i\in I}$ is an open cover of $Y$ if $O_i$ is open in $X$ for every $i$ in $I$ and $Y\subset \cup_{i \in I} O_i$.  
\end{definition}

\begin{definition}
Let $X$ be a Banach space. A subset $Y$ of $X$ is said to be \emph{compact} if
from any open cover of $Y$, it is possible to extract a finite cover of $Y$. 
\end{definition}

\begin{definition}
Let $X$ be a Banach space. A subset $Y$ of $X$ is said to be \emph{sequentially compact} if
from any sequence $(\psi_n)_{n\in \mathbf{N}}$ taking value in $Y$ , it is possible to extract a subsequence $(\psi_{\phi(n)})_{n \in \mathbf{N}}$ converging in $Y$. 
\end{definition}

\begin{definition}
Let $X$ be a Banach space. A subset $Y$ of $X$ is said to be \emph{totally 
bounded} if for every $\varepsilon>0$, there exist $N \in \mathbf{N}$ and a finite family $(x_i)_{1\leq i \leq N}$ in $X$  such that 
$$Y \subset \bigcup_{i=1}^N B_X(x_i, \varepsilon).$$ 
\end{definition}

\begin{proposition}\label{PRO_equivalence_compacites_topologique_sequentielle}
Let $X$ be a Banach space. For every subset $Y$ of $X$, the following assertions are equivalent:
\begin{enumerate}
\item $Y$ is compact.
\item $Y$ is sequentially compact.
\item $Y$ is complete and totally bounded.
\item $Y$ is closed and totally bounded.
\end{enumerate}   
\end{proposition}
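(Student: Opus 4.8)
The plan is to establish the four equivalences by proving the cycle of implications $(1) \Rightarrow (2) \Rightarrow (3) \Rightarrow (4) \Rightarrow (1)$, which is the standard route in a complete metric space. Throughout I would exploit that $X$ is a Banach space, hence complete, so that a subset of $X$ is complete if and only if it is closed; this makes the passage from (3) to (4) essentially immediate (a complete subset of a metric space is closed, and total boundedness is unaffected) and lets me upgrade ``closed'' to ``complete'' whenever convenient.

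For $(1) \Rightarrow (2)$ I would argue by contraposition: given a sequence $(\psi_n)_{n\in\mathbf{N}}$ in $Y$ admitting no subsequence converging in $Y$, every point $y \in Y$ has an open ball $B_X(y,r_y)$ containing $\psi_n$ for only finitely many indices $n$, since otherwise $y$ would be the limit of a subsequence. The family $(B_X(y,r_y))_{y\in Y}$ is an open cover of $Y$, and any finite subcover would contain $\psi_n$ for only finitely many $n$, contradicting that the sequence has infinitely many terms in $Y$. For $(2) \Rightarrow (3)$, completeness follows because a Cauchy sequence in $Y$ has a convergent subsequence by sequential compactness, and a Cauchy sequence with a convergent subsequence converges to the same limit; total boundedness follows again by contraposition, since if $Y$ cannot be covered by finitely many balls of some fixed radius $\varepsilon>0$, one builds greedily a sequence whose points are pairwise at distance at least $\varepsilon$, and such a sequence has no Cauchy---hence no convergent---subsequence.

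The main obstacle is the remaining direction $(4) \Rightarrow (1)$, which recovers the open-cover definition of compactness from closedness and total boundedness. Using completeness of $X$, closedness upgrades to completeness, so it suffices to show that a complete, totally bounded set is compact, and I would do this by contradiction through a nesting argument. Assuming an open cover $(O_i)_{i\in I}$ of $Y$ admits no finite subcover, I cover $Y$ by finitely many balls of radius $1$; at least one meets $Y$ in a set with no finite subcover. Intersecting iteratively with finitely many balls of radius $2^{-n}$ produces a decreasing sequence of nonempty sets $Y_n \subset Y$, each admitting no finite subcover and with diameter tending to $0$. Choosing $y_n \in Y_n$ yields a Cauchy sequence whose limit $y \in Y$ lies in some $O_{i_0}$; as $O_{i_0}$ contains a ball around $y$ and the $Y_n$ eventually shrink inside that ball, one gets $Y_n \subset O_{i_0}$ for large $n$, contradicting that $Y_n$ has no finite subcover. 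Closing this loop gives all four equivalences. The delicate points are checking that the nested sets have diameters shrinking to $0$ and that the selected points form a Cauchy sequence; both are routine but require care in tracking the radii.
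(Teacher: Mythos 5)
Your proposed proof is correct, but there is nothing in the paper to compare it against: Proposition~\ref{PRO_equivalence_compacites_topologique_sequentielle} is stated without proof, as part of the Section~\ref{SEC_basic_facts} reminder of classical facts about topology in Banach spaces (it is the standard metric-space characterization of compactness). Your cycle $(1)\Rightarrow(2)\Rightarrow(3)\Rightarrow(4)\Rightarrow(1)$ is the standard textbook argument, and all its steps are sound: the contrapositive covering argument for $(1)\Rightarrow(2)$ (if no subsequence converges to $y$, some ball around $y$ contains only finitely many terms); the Cauchy-subsequence and greedy $\varepsilon$-separated-sequence arguments for $(2)\Rightarrow(3)$; the equivalence of ``closed'' and ``complete'' inside the complete ambient space $X$, which makes $(3)\Leftrightarrow(4)$ immediate; and the nested-sets contradiction for $(4)\Rightarrow(1)$. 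The delicate points you flag do go through: each $Y_n$ is nonempty because a set admitting no finite subcover cannot be empty, the diameters shrink geometrically since $Y_{n}$ sits inside a single ball of radius $2^{-n}$, and nestedness then forces the chosen points $y_n$ to be Cauchy, so completeness places their limit in $Y$ and inside some $O_{i_0}$, yielding the contradiction.
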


\begin{proposition}\label{PRO_sum_compact}
Let $X$ be a Banach space, $N$ in $\mathbf{N}$ and $(Y_i)_{1\leq i \leq N}$ a finite family of compact subsets of $X$.  Then, the finite sum
$$
\sum_{i=1}^N Y_i =\{y_1 + y_2 +\ldots +y_N \mid y_i\in Y_i,  i = 1,\ldots,N\}
$$
is compact as well.
\end{proposition}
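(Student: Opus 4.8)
The plan is to reduce to the equivalent notion of sequential compactness provided by Proposition~\ref{PRO_equivalence_compacites_topologique_sequentielle}, and then to conclude by a finite iterated extraction of subsequences. Concretely, it suffices to prove that every sequence in $\sum_{i=1}^N Y_i$ admits a subsequence converging to a point of $\sum_{i=1}^N Y_i$.

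So let $(\psi_n)_{n\in\mathbf{N}}$ be an arbitrary sequence in $\sum_{i=1}^N Y_i$. By definition of the sum, for every $n$ one can write $\psi_n=\sum_{i=1}^N y_i^n$ with $y_i^n\in Y_i$ for each $i$. First I would extract along the first summand: since $Y_1$ is compact, hence sequentially compact, there is an extraction $\phi_1$ such that $(y_1^{\phi_1(n)})_n$ converges to some $y_1^\infty\in Y_1$. I would then extract along the second summand inside this subsequence: since $Y_2$ is sequentially compact, there is a further extraction $\phi_2$ such that $(y_2^{\phi_1\circ\phi_2(n)})_n$ converges to some $y_2^\infty\in Y_2$, while $(y_1^{\phi_1\circ\phi_2(n)})_n$ still converges to $y_1^\infty$, being a subsequence of a convergent sequence. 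Iterating this $N$ times (a finite number of steps, so that no diagonal argument is needed) produces a single extraction $\phi=\phi_1\circ\cdots\circ\phi_N$ such that, for every $i\in\{1,\dots,N\}$, the sequence $(y_i^{\phi(n)})_n$ converges to some $y_i^\infty\in Y_i$.

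It then remains to pass to the limit in the finite sum. Since addition is continuous in the Banach space $X$ and the sum has a fixed, finite number of terms, the subsequence $\psi_{\phi(n)}=\sum_{i=1}^N y_i^{\phi(n)}$ converges in $X$ to $\psi_\infty:=\sum_{i=1}^N y_i^\infty$. As each $y_i^\infty$ belongs to $Y_i$, the limit $\psi_\infty$ belongs to $\sum_{i=1}^N Y_i$. This shows that $\sum_{i=1}^N Y_i$ is sequentially compact, hence compact by Proposition~\ref{PRO_equivalence_compacites_topologique_sequentielle}.

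I expect no serious obstacle here: the whole argument becomes elementary once compactness is traded for sequential compactness. The only mild care needed is to perform the extractions one summand at a time, so that after the last extraction all $N$ coordinate sequences converge simultaneously; this works precisely because $N$ is finite. An alternative route, if one prefers to avoid sequences, would be to invoke the equivalence with closedness and total boundedness: total boundedness of $\sum_{i=1}^N Y_i$ follows by covering each $Y_i$ with finitely many balls of radius $\varepsilon/N$ and summing the centers, while closedness can be recovered from the same limiting argument. The sequential-compactness proof above, however, seems the most direct.
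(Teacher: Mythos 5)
Your proof is correct. Note, however, that the paper itself offers no proof of this proposition: it is stated as a standard fact (alongside Proposition~\ref{PRO_sum_totally_bounded}) and used later without justification, so there is no argument in the paper to compare yours against. Your iterated-extraction argument is the natural sequential proof: the composition $\phi=\phi_1\circ\cdots\circ\phi_N$ is handled correctly (each further extraction preserves the convergence already obtained, and finiteness of $N$ makes a diagonal argument unnecessary), and you rightly check that the limit $\psi_\infty=\sum_{i=1}^N y_i^\infty$ lies in $\sum_{i=1}^N Y_i$, which is what the paper's definition of sequential compactness requires. For the record, there is an even shorter route that avoids extractions altogether: $\sum_{i=1}^N Y_i$ is the image of the product $Y_1\times\cdots\times Y_N$ (compact, by Tychonoff or by the same finite extraction argument) under the addition map $(y_1,\dots,y_N)\mapsto y_1+\cdots+y_N$, which is continuous on $X^N$; a continuous image of a compact set is compact. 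This is essentially your argument repackaged, but it localizes all the work in two standard facts and is the formulation that generalizes most readily.
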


\begin{proposition}\label{PRO_sum_totally_bounded}
Let $X$ be a Banach space, $N$ in $\mathbf{N}$ and $(Y_i)_{1\leq i \leq N}$ a finite family of totally bounded subsets of $X$.  Then, the finite sum
$$
\sum_{i=1}^N Y_i =\{y_1 + y_2 +\ldots +y_N\mid y_i\in Y_i,   i = 1,\ldots,N\}
$$
is totally bounded as well.
\end{proposition}
%

\begin{proposition}\label{PRO_uniforme_continuite_eAB}
Let $X$ be a Banach space, $T>0$ and $(A,B)$ satisfies Assumption \ref{ASS_base}. Define the mapping
$$
\begin{array}{lccc}
F:&[0,T]\times [0,T]\times X & \to & X\\
 &(s,t,\psi)&  \mapsto &  e^{|t-s|A} B \psi 
\end{array}
$$
Then, for every totally bounded subset $Y$ of $X$, the set $F([0,T]\times [0,T]\times Y)$ is totally bounded as well.
\end{proposition}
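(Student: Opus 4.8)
The plan is to recognize that $F$ factors through the bounded operator $B$ followed by the semigroup evaluation, and to exploit the compactness obtained by closing up the totally bounded set $BY$. First I would observe that since $B$ is bounded by Assumption~\ref{ASS_base}, the set $BY$ is totally bounded: given $\varepsilon>0$ and a cover of $Y$ by balls of radius $\varepsilon/(1+\|B\|)$ (which exists by total boundedness of $Y$), their images under $B$ provide a cover of $BY$ by balls of radius $\varepsilon$. I then set $Z:=\overline{BY}$, which by Proposition~\ref{PRO_equivalence_compacites_topologique_sequentielle} is compact, being closed and totally bounded. The key observation is that $F(s,t,\psi)=e^{|t-s|A}(B\psi)$ with $B\psi\in Z$, so that $F([0,T]\times[0,T]\times Y)$ is contained in the image of the evaluation map restricted to $Z$.

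Next I would introduce
\[
\Phi:[0,T]\times[0,T]\times Z\to X,\qquad (s,t,z)\mapsto e^{|t-s|A}z,
\]
and prove that $\Phi$ is continuous. Since $[0,T]\times[0,T]\times Z$ is a finite product of compact sets, it is compact (by a straightforward diagonal extraction using the sequential characterization of Proposition~\ref{PRO_equivalence_compacites_topologique_sequentielle}). Continuity of $\Phi$ then immediately yields that $\Phi([0,T]\times[0,T]\times Z)$ is compact, hence totally bounded, again by Proposition~\ref{PRO_equivalence_compacites_topologique_sequentielle}. As $F([0,T]\times[0,T]\times Y)\subset \Phi([0,T]\times[0,T]\times Z)$ and any subset of a totally bounded set is totally bounded (the covering balls of the larger set cover the smaller one), the conclusion follows.

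The main obstacle is establishing the continuity of $\Phi$, which requires combining two different kinds of information. For the dependence on $z$ I would use the uniform operator bound of Proposition~\ref{PRO_Hille_Yosida}, writing $\|e^{\tau A}z-e^{\tau A}z'\|\leq Me^{\omega T}\|z-z'\|$ for all $\tau\in[0,T]$; this is a genuine Lipschitz estimate, uniform in the time variable. For the dependence on $(s,t)$, however, I only have the strong continuity of the $C^0$-semigroup, that is, continuity of $\tau\mapsto e^{\tau A}z$ for each \emph{fixed} $z$, together with continuity of $(s,t)\mapsto|t-s|$. The way to merge these into joint continuity is the splitting
\[
\|e^{|t-s|A}z-e^{|t'-s'|A}z'\|\leq Me^{\omega T}\|z-z'\|+\|e^{|t-s|A}z'-e^{|t'-s'|A}z'\|,
\]
where the first term is controlled by the uniform Lipschitz estimate and the second by strong continuity at the fixed point $z'$. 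This delivers the joint continuity of $\Phi$ and completes the argument.
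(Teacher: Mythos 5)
Your proposal is correct and follows essentially the same route as the paper: both establish joint continuity of the semigroup evaluation map via the same splitting (the uniform bound $Me^{\omega t}$ of Proposition~\ref{PRO_Hille_Yosida} for the vector variable plus strong continuity in time at a fixed vector), then pass to the compact closure of a totally bounded set and use that a continuous image of a compact set is compact, hence totally bounded. The only cosmetic difference is that you apply $B$ first and close up $BY$, whereas the paper keeps $B$ inside the continuous map $F$ and closes up $Y$ itself.
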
 
\begin{proof}
We claim that $G:(t,\psi)\mapsto e^{tA}\psi$ is  jointly continuous in its two variables. Indeed, for every $\psi,\psi_0$ in $X$, for every $t,t_0\geq 0$,
\begin{eqnarray*}
{\|e^{tA}\psi-e^{t_0 A} \psi_0 \| }
& \! \!\leq \! \!& \|e^{tA} (\psi-\psi_0) \| + \| (e^{tA}-e^{t_0A})\psi_0\|\\
&\! \! \leq \! \! & M e^{\omega t} \|\psi -\psi_0\| + \| (e^{tA}-e^{t_0A})\psi_0\|.
\end{eqnarray*} 
This last quantity tends to zero as $(t,\psi)$ tends to $(t_0,\psi_0)$.
As a consequence, $F$ is continuous (as composition of continuous functions). 

If $Y$ is totally bounded, the topological closure $\bar{Y}$ of $Y$ is compact (because the ambient space $X$ is complete). Hence $[0,T]\times [0,T] \times \bar{Y}$ is compact. By continuity, $F([0,T]\times [0,T] \times \bar{Y})$ is compact, hence is totally bounded. The set $F([0,T]\times [0,T]\times Y)$, which is contained in $F([0,T]\times [0,T] \times \bar{Y})$,  is, therefore, totally bounded as well.
\end{proof}

\subsection{Partition of unity in Banach spaces}

\begin{definition}
Let $X$ a Banach space. A family $(x_i)_{i \in I}$ of points of $X$ is \emph{locally finite} if for every $x$ in $X$ and every $R>0$, the cardinality of the set 
$$
\left ( \bigcup_{i \in I} \{x_i\} \right ) \cap B_X(x,R)
$$
is finite.
\end{definition}

\begin{definition}
Let $X$ be a Banach space, $Y$ be a subset of $X$, and $(O_i)_{i\in I}$ be an open cover of $Y$.
A family $(\phi_i)_{i \in I}$ of continuous functions from $Y$ to $[0,1]$ is called a \emph{partition of the unity of $Y$ adapted to the cover $(O_i)_{i\in I}$} 
if
\begin{itemize}
 \item[$(i)$]  for every $i \in I$, $\phi_i(x)=0$ for every $x \notin O_i$;
 \item[$(ii)$] $\sum_{i\in I}\phi_i(x)=1$ for every $x \in Y$.
\end{itemize}
\end{definition}

\begin{proposition}\label{PRO_partition_unity}
Let $X$ be a Banach space, $Y$ a subset of $X$, $\delta>0$, $(x_j)_{j\in J}$ a locally finite family of points in $Y$ such that $Y \subset \cup_{j \in J}B_X(x_j, \delta)$. Then, there exists $(\phi_j)_{j \in J}$ a partition of the unity adapted to 
the open cover $(B(x_j,2\delta))_{j \in J}$ of $Y$.

 Moreover, if a family $(\phi_j)_{j\in J}$ is a partition of the unity adapted to 
the open cover $(B_X(x_j,2\delta))_{j \in J}$, then  for every $x$ in $Y$, $\|x-\sum_{j\in J}\phi_j(x) x_j \| \leq 2\delta$.
\end{proposition}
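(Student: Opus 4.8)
The plan is to treat the two assertions separately: first construct an explicit partition of unity, and then verify that the stated estimate holds for \emph{any} adapted partition.

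For the existence I would build the $\phi_j$ by normalizing natural ``tent'' functions. For each $j \in J$ set $\theta_j(x) = \max\{0,\, 2\delta - \|x - x_j\|\}$, a continuous nonnegative function on $X$ that is strictly positive exactly on $B_X(x_j,2\delta)$ and vanishes outside it. The first thing to check is that the sum $S(x) = \sum_{j\in J}\theta_j(x)$ is well defined and continuous on $Y$. Fix $x_0 \in Y$; for $x \in B_X(x_0,\delta)$ a term $\theta_j(x)$ can be nonzero only if $\|x - x_j\| < 2\delta$, which forces $\|x_0 - x_j\| < 3\delta$, and by local finiteness only finitely many $x_j$ lie in $B_X(x_0,3\delta)$. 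Hence on the neighborhood $B_X(x_0,\delta)$ the sum $S$ reduces to a finite sum of continuous functions and is therefore continuous. Moreover, the hypothesis $Y \subset \cup_{j}B_X(x_j,\delta)$ guarantees that every $x \in Y$ lies in some $B_X(x_{j_0},\delta)$, whence $\theta_{j_0}(x) > \delta > 0$ and so $S(x) > 0$. I would then define $\phi_j = \theta_j/S$: this is continuous on $Y$ (quotient of continuous functions with nonvanishing denominator), takes values in $[0,1]$ since $0 \le \theta_j \le S$, satisfies condition $(i)$ because $\theta_j$ vanishes off $B_X(x_j,2\delta)$, and satisfies condition $(ii)$ since $\sum_j \phi_j = S/S = 1$ on $Y$.

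For the ``moreover'' part I would argue directly, using only the two defining properties of an adapted partition. Given any such $(\phi_j)_{j\in J}$ and any $x \in Y$, property $(ii)$ lets me write $x = \sum_{j}\phi_j(x)\,x$, so that
\[
x - \sum_{j\in J}\phi_j(x)\,x_j = \sum_{j\in J}\phi_j(x)\,(x - x_j).
\]
By property $(i)$, the only indices contributing to this sum are those with $x \in B_X(x_j,2\delta)$, for which $\|x - x_j\| < 2\delta$. Applying the triangle inequality together with $\sum_j \phi_j(x) = 1$ then yields $\|x - \sum_j \phi_j(x)\,x_j\| \le 2\delta \sum_j \phi_j(x) = 2\delta$.

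The only genuinely delicate point is the first one: ensuring that $S$ is continuous and bounded away from zero near each point of $Y$. Both facts are consequences of the two standing hypotheses---local finiteness makes $S$ a locally finite (hence locally a finite sum, and continuous) expression, while the $\delta$-cover makes $S$ strictly positive on $Y$---so once these are recorded the construction is routine. The estimate in the second part, by contrast, uses neither continuity nor local finiteness, only properties $(i)$ and $(ii)$.
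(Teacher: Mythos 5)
Your proof is correct and takes essentially the same approach as the paper: normalize a family of continuous bump functions supported in the balls $B_X(x_j,2\delta)$ (the paper uses plateau functions equal to $1$ on $B_X(x_j,\delta)$ and affine on the annulus, rather than your tent functions $\max\{0,2\delta-\|x-x_j\|\}$), then prove the estimate by the identical argument writing $x-\sum_j\phi_j(x)x_j=\sum_j\phi_j(x)(x-x_j)$ and using property $(i)$. If anything, your verification that local finiteness makes $S$ locally a finite sum (hence continuous) is spelled out more carefully than in the paper, which only asserts convergence and non-vanishing of the normalizing sum.
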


\begin{proof}
We first prove the existence of a partition of the unity adapted to 
the open covering $(B_X(x_j,2\delta))_{j \in J}$ of $Y$.
To this end, we define, for every $j$ in $J$, the continuous functions
$\varphi_j:X\to [0,1]$ by 
$$
\begin{cases}
 \varphi_j(x)=1, &\mbox{ if } \|x-x_j\| < \delta,\\
 \phi_j(x)=2-\|x-x_j\|/\delta,  &\mbox{ if } \delta \leq \|x-x_j\| < 2\delta,\\
 \varphi_j(x)=0, &\mbox{ if }  2\delta \leq\|x-x_j\|.
\end{cases}
$$
Since  the family $(x_j)_{j\in J}$ is locally finite, the sum 
$\sum_{j\in J} \varphi_j(x)$ converges for every $x$ in $Y$. Moreover, since
$Y\subset \cup_{j\in J}B(x_j,\delta)$, the
function $x\mapsto \sum_{j\in J} \varphi_j(x)$   does not vanish on $Y$.
For every $j_0$ in $J$, we define
$\phi_{j_0}$ by
$$
\phi_{j_0}(x)= \varphi_{j_0}(x) \frac{1}{\sum_{j\in J} \varphi_j(x)},
$$
and the family $(\phi_j)_{j \in J}$ is a partition of the unity adapted to 
the open cover $(B_X(x_j,2\delta))_{j \in J}$ of $Y$. 

We now prove the second point of Proposition \ref{PRO_partition_unity}.
Let $(\phi_j)_{j\in J}$ be a partition of unity of $Y$ adapted to the cover  $(B_X(x_j,2\delta))_{j \in J}$. Then, for every 
$x$ in $Y$,
\begin{eqnarray*}
\left \|x -\sum_{j \in J} \phi_j(x) x_j \right \| & =& \left \|\sum_{j\in J} \phi_j(x) x -\sum_{j \in J} \phi_j(x) x_j \right \|\\
&=& \left \|\sum_{j\in J} \phi_j(x) (x - x_j) \right \|\\
&\leq & \sum_{j\in J} \phi_j(x) \left \| x - x_j \right \|. 
\end{eqnarray*}   
By construction, $\phi_j(x)=0$ as soon as $\| x - x_j \|  \geq 2\delta$. Hence,
\begin{eqnarray*}
\left \|x -\sum_{j \in J} \phi_j(x) x_j \right \| & \leq & 2\delta \sum_{j\in J} \phi_j(x)\leq  2\delta,
\end{eqnarray*}
which concludes the proof.
\end{proof}

 \section{DYSON EXPANSION} \label{SEC_Dyson}

\subsection{The Dyson Operators}\label{SEC_Dyson_definition}
For every $u$ in $L^1_{loc}([0,+\infty),\mathbf{R})$,  $p \in \mathbf{N}$, and $t\geq 0$ we define the linear bounded operator $W_p(t,u):X\to X $ recursively by
\begin{align*}
W_0(t,u)\psi & =e^{(t-s)A}\psi\\
W_p(t,u)\psi & =\int_0^t e^{(t-s)A}B W_{p-1}(s,u)\psi u(s) \mathrm{d}s, \quad \! \mbox{\,for\,} p\geq 1,
\end{align*}
for every $\psi$ in $X$.
We have the following estimate on the norm of the operator.
\begin{proposition}\label{PRO_borne_Wp}
For every $u$ in $L^1_{loc}([0,+\infty),\mathbf{R})$,  $p \in \mathbf{N}$, and $t\geq 0$ 
\[
\|W_p(t,u)\| \leq \frac{Me^{\omega t} \|B\|^{p} (\int_{0}^t|u(s)|\mathrm{d}s)^p}{p!}.
\]
\end{proposition}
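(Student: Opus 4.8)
The plan is to proceed by induction on $p$, the estimate being the standard bound for iterated Duhamel (Dyson) kernels. It is convenient to abbreviate $U(t)=\int_0^t|u(s)|\,\mathrm{d}s$, so that the claimed inequality reads $\|W_p(t,u)\|\le Me^{\omega t}\|B\|^pU(t)^p/p!$. For the base case $p=0$ one has $W_0(t,u)=e^{tA}$, and Proposition~\ref{PRO_Hille_Yosida} gives directly $\|W_0(t,u)\|=\|e^{tA}\|\le Me^{\omega t}$, which is exactly the right-hand side at $p=0$ since $U(t)^0=1$ and $0!=1$.

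For the inductive step, assume the bound at rank $p-1$ and fix $\psi\in X$ with $\|\psi\|\le 1$. Starting from the recursive definition of $W_p$, I would use that the norm of the (Bochner) integral is at most the integral of the norm, together with sub-multiplicativity of the operator norm, to obtain
\begin{align*}
\|W_p(t,u)\psi\|\le \int_0^t \|e^{(t-s)A}\|\,\|B\|\,\|W_{p-1}(s,u)\|\,|u(s)|\,\mathrm{d}s .
\end{align*}
Bounding the semigroup factor by $\|e^{(t-s)A}\|\le Me^{\omega(t-s)}$ (Proposition~\ref{PRO_Hille_Yosida}) and inserting the induction hypothesis for $\|W_{p-1}(s,u)\|$, the two exponentials combine into $e^{\omega t}$ and the powers of $\|B\|$ into $\|B\|^p$, so that everything reduces to estimating $\int_0^t U(s)^{p-1}|u(s)|\,\mathrm{d}s$.

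The only genuinely non-mechanical point, and the one that produces the crucial factorial, is the remark that $U$ is absolutely continuous with $U'(s)=|u(s)|$ for almost every $s$, so that the integrand is almost everywhere equal to $\tfrac1p\,\tfrac{\mathrm{d}}{\mathrm{d}s}U(s)^p$; hence
\begin{align*}
\int_0^t U(s)^{p-1}|u(s)|\,\mathrm{d}s=\frac{U(t)^p}{p}.
\end{align*}
Together with $p\cdot(p-1)!=p!$ this yields the announced bound. I expect the main (and still mild) obstacle to be purely the bookkeeping: confirming that the map $s\mapsto e^{(t-s)A}BW_{p-1}(s,u)\psi\,u(s)$ is strongly measurable and integrable, so that the integral estimate above is legitimate. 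This follows from $u\in L^1_{loc}$ and the continuity in $s$ of $W_{p-1}(s,u)\psi$, itself inherited from the joint continuity of $(t,\psi)\mapsto e^{tA}\psi$ established in the proof of Proposition~\ref{PRO_uniforme_continuite_eAB}, together with a careful tracking of the powers of $U$ and of the factorials along the induction.
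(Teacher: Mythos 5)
Your proof is correct and takes essentially the same route as the paper's: induction on $p$, base case from Proposition~\ref{PRO_Hille_Yosida}, and the inductive step via bounding the norm of the integral by the integral of the norms together with the identity $\int_0^t \bigl(\int_0^s|u|\bigr)^{p-1}|u(s)|\,\mathrm{d}s = \tfrac{1}{p}\bigl(\int_0^t|u|\bigr)^{p}$, which the paper uses implicitly in its final inequality and you spell out explicitly. One caveat you share with the paper's own proof: combining $\|e^{(t-s)A}\|\le Me^{\omega(t-s)}$ with the inductive hypothesis $\|W_{p-1}(s,u)\|\le Me^{\omega s}\|B\|^{p-1}U(s)^{p-1}/(p-1)!$ actually yields a factor $M^2$ (hence $M^{p+1}$ after unrolling the induction), not the single $M$ claimed; this slip is harmless for the later use of the estimate, since $\sum_p M^{p+1}\|B\|^p\bigl(\int_0^t|u|\bigr)^p/p!$ still converges, so the tail of the Dyson expansion still tends to zero.
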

\begin{proof}
We prove the result by induction on $p$ in $\mathbf{N}$.
For $p=0$ the result clearly follows from Proposition~\ref{PRO_Hille_Yosida}.
Assume that the result holds for $p\geq 0$.   Then, for every $\psi$ in $X$,
\begin{align*}
\lefteqn{\|W_{p+1}(t,u) \psi \| }\\
& ~~~~\leq  \int_0^t e^{(t-s)A}B W_{p}(s,u)\psi u(s) \mathrm{d}s\\
&~~~~\leq M \int_0^t e^{(t-s)\omega} \| B \| \frac{e^{\omega s} \|B\|^{p} (\int_{0}^s|u(\tau)|\mathrm{d}\tau)^p}{p!}u(s) \mathrm{d}s\\
&~~~~\leq M e^{\omega t} \|B \|^{p+1} \frac{ (\int_{0}^t|u(\tau)|\mathrm{d}\tau)^{p+1}}{(p+1)!}.
\end{align*}
The last inequality follows from Proposition \ref{PRO_Hille_Yosida}. We conclude the proof by induction on $p$.
\end{proof}

\subsection{A compactness property}\label{SEC_Dyson_compact}
\begin{lemma}\label{PRO_calWj_totally_bounded} 
For every $j$ in $\mathbf{N}$, $T\geq 0$ and $K\geq 0$, and $\psi$ in $X$ the set
 \begin{equation}\label{DEF_calW}
\mathcal{W}_j^{T,K}=\{W_j(t,u) \psi \mid 0\leq t \leq T, \|u\|_{L^1}\leq K\},
\end{equation}
is totally bounded 
\end{lemma}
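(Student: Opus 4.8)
The plan is to induct on $j$, using Proposition~\ref{PRO_uniforme_continuite_eAB} to carry total boundedness through one Dyson iteration and the partition of unity of Proposition~\ref{PRO_partition_unity} to collapse the integral defining $W_j$ onto a finite-dimensional, and therefore harmless, set. The base case $j=0$ is immediate: $W_0$ does not depend on $u$, so $\mathcal{W}_0^{T,K}=\{e^{tA}\psi \mid 0\le t\le T\}$ is the image of the compact interval $[0,T]$ under the continuous map $t\mapsto e^{tA}\psi$, hence compact and a fortiori totally bounded.

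For the inductive step, assume $\mathcal{W}_{j-1}^{T,K}$ is totally bounded and let $F$ denote the map of Proposition~\ref{PRO_uniforme_continuite_eAB}. For $0\le s\le t\le T$ and $\|u\|_{L^1}\le K$ the integrand of $W_j(t,u)\psi$ equals $F(s,t,W_{j-1}(s,u)\psi)$, which lies in $\mathcal{F}:=F([0,T]\times[0,T]\times\mathcal{W}_{j-1}^{T,K})$; by Proposition~\ref{PRO_uniforme_continuite_eAB} the set $\mathcal{F}$ is totally bounded. Fixing $\varepsilon>0$ and an auxiliary $\delta>0$, total boundedness yields a finite family $y_1,\dots,y_N$ of points of $\mathcal{F}$ with $\mathcal{F}\subset\bigcup_i B_X(y_i,\delta)$; being finite it is locally finite, so Proposition~\ref{PRO_partition_unity} supplies a partition of unity $(\phi_i)_{1\le i\le N}$ of $\mathcal{F}$ for which $\|y-\sum_i\phi_i(y)y_i\|\le 2\delta$ for every $y\in\mathcal{F}$.

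The crucial step is to insert this pointwise bound under the integral sign. Setting $c_i(t,u):=\int_0^t\phi_i\bigl(F(s,t,W_{j-1}(s,u)\psi)\bigr)u(s)\,\mathrm{d}s$, one obtains
\[
\Bigl\|W_j(t,u)\psi-\sum_{i=1}^N c_i(t,u)\,y_i\Bigr\|\le\int_0^t 2\delta\,|u(s)|\,\mathrm{d}s\le 2\delta K,
\]
a bound uniform in both $t\in[0,T]$ and $u$ with $\|u\|_{L^1}\le K$. Since $0\le\phi_i\le 1$, each coefficient satisfies $|c_i(t,u)|\le K$, so every approximating point $\sum_i c_i(t,u)y_i$ lies in the image of the cube $[-K,K]^N$ under the continuous linear map $(c_i)\mapsto\sum_i c_iy_i$, a compact subset of $\mathrm{span}\{y_1,\dots,y_N\}$. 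Covering that compact set by finitely many balls of radius $\delta$ and choosing $\delta$ so that $(2K+1)\delta\le\varepsilon$ produces, via the triangle inequality, a finite $\varepsilon$-net of $\mathcal{W}_j^{T,K}$; by Proposition~\ref{PRO_equivalence_compacites_topologique_sequentielle} this is exactly total boundedness, completing the induction.

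I expect the main obstacle to be precisely this collapse of the integral. Because balls of $L^1$ fail to be weakly compact, one cannot extract a weak limit of the controls as in the classical argument; the role of the partition of unity is to replace that weak-compactness step by a finite-dimensional one, the decisive features being that the approximation error $2\delta K$ is uniform over all admissible $t$ and $u$ and that the coefficients $c_i$ are bounded by $K$ independently of $u$. A minor point to verify beforehand is the measurability in $s$ of $W_{j-1}(s,u)\psi$ (in fact it is continuous), which ensures each $c_i(t,u)$ is well defined.
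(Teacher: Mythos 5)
Your proof is correct and follows essentially the same route as the paper's: induction on $j$, Proposition~\ref{PRO_uniforme_continuite_eAB} to make the set of integrands totally bounded, a partition of unity (Proposition~\ref{PRO_partition_unity}) to approximate $W_j(t,u)\psi$ uniformly by finite linear combinations $\sum_i c_i(t,u)y_i$ with $|c_i|\le K$, and compactness of the resulting finite-dimensional set of approximants to extract the $\varepsilon$-net. If anything, your coefficient cube $[-K,K]^N$ is slightly more careful than the paper's $\sum_{i}[0,K]x_i$, since the controls are signed.
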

\begin{proof}
We prove the result by induction on $j$ in $\mathbf{N}$.
For $j=0$, consider 
$\mathcal{W}_0^{T,K}=\{e^{tA}\psi, 0\leq t 
\leq T\}$ and let $(w_n)_{n\in \mathbf{N}}$ be a sequence in $\mathcal{W}_0^{T,K} 
$. Then there exists a sequence $(t_n)_{n\in \mathbf{N}}$ such that $w_n=e^{t_n A} 
\psi$ for every $n$. Up to extraction $\lim_{n\to 
\infty} t_n=t\in [0,T]$ since $[0,T]$ is compact. By definition of $C^0$-semigroup, $\lim_{n\to \infty} e^{t_n A}\psi =e^{tA}\psi$. This proves that 
$\mathcal{W}_0^{T,K}$ is sequentially compact, hence compact  and, in particular, totally 
bounded (Proposition \ref{PRO_equivalence_compacites_topologique_sequentielle}).

Assume that, for $j\geq 0$, $\mathcal{W}_j^{T,K}$ is totally bounded. 
By Proposition~\ref{PRO_uniforme_continuite_eAB}, the set
\begin{eqnarray*}
Z_j^{T,K}&:= &\{e^{(t-s)A} B \psi, \psi \in W_j^{T,K}, 0\leq s \leq t \leq T\}  \\
&\subset & F([0,T]^2 \times \mathcal{W}_j^{T,K})
\end{eqnarray*}
is totally bounded as well. 

Let $\varepsilon>0$ be given and define $\delta= \frac{\varepsilon}{2K+1}>0$. Since $Z_j^{T,K}$ is totally bounded, there exists a finite family $(x_i)_{1\leq i \leq N_{\delta}}$ in $Z_j^{T,K}$ such that
$$
Z_j^{T,K} \subset \bigcup_{i=1}^{N_\delta} B_X(x_i, \delta).
$$  
Let $(\phi_i)_{\leq i \leq N_\delta}$ be a partition of the unity adapted to the cover 
$\cup_{i=1}^{N_\delta} B(x_i, 2\delta)$ of $Z_j^{T,K} $. Such a partition of the unity exists by Proposition~\ref{PRO_partition_unity}, and moreover, for every $x$ in $Z_j^{T,K}$, we have 
\begin{equation}\label{EQ_major_part_unity}
\left \|x -\sum_{i=1}^{N_\delta} \phi_i(x) x_i \right \| \leq 2\delta.
\end{equation}
Applying the inequality (\ref{EQ_major_part_unity}) with $x= e^{(t-s)A} B W_{j}(s,u)\psi_0$, we get, for every $u$ in $L^1$ and every $(s,t)$ such that $0\leq s\leq t$,
\begin{eqnarray*}
\lefteqn{\left \|  e^{(t-s)A} B W_{j}(s,u)\psi_0  \right.}\\
&~~~~~~~~~& \left.  - \sum_{i=1}^{N_\delta}  \phi_i(e^{(t-s)A} B W_{j}(s,u)\psi_0 )  x_i \right \|\leq 2\delta.
\end{eqnarray*}
Multiplying by $u(s)$ and integrating for $s$ in $[0,t]$, one gets for $\|u\|_{L^1}\leq K$
 \begin{eqnarray*}
 \lefteqn{\left \|\int_0^t e^{(t-s)A} B W_{j}(s,u)\psi_0 u(s)\mathrm{d}s -\right.}\\
 &&\left. \! \sum_{i=1}^{N_\delta} \int_0^t \!\!\! \phi_i(e^{(t-s)A} B W_{j}(s,u)\psi_0 ) u(s) \mathrm{d}s \, x_i \right \| \leq 2\delta \|u\|_{L^1},
 \end{eqnarray*}
 that is
 \begin{eqnarray}\label{EQ_major_precompact_Z}
 \lefteqn{\left \|\int_0^t e^{(t-s)A} B W_{j}(s,u)\psi_0 u(s)\mathrm{d}s \right. }  \\
 & \!\!&\left . - \sum_{i=1}^{N_\delta} \int_0^t \!\!\phi_i(e^{(t-s)A} B W_{j}(s,u)\psi_0 ) u(s) \mathrm{d}s \, x_i \right \| \leq 2\delta K,\nonumber
 \end{eqnarray}
The set $\sum_{i=1}^{N_\delta} [0,K] x_i$ is compact by Proposition \ref{PRO_sum_compact} and, hence, totally bounded. 
Then there exists a finite family 
$(y_i)_{1 \leq i \leq  N_\delta'}$ such that 
\begin{equation}\label{EQ_maillage_pave}
\sum_{i=1}^{N_\delta} [0,K] x_i \subset \bigcup_{i=1}^{N_\delta'} B_X(y_i,\delta).
\end{equation}
From (\ref{EQ_major_precompact_Z}) and (\ref{EQ_maillage_pave}), one deduces that 
$$
\mathcal{W}_{j+1}^{T,K} \subset \bigcup_{i=1}^{N_\delta'} B_X(y_i, (2K+1)\delta)
= \bigcup_{i=1}^{N_\delta'} B_X(y_i, \varepsilon).
$$
This proves that $\mathcal{W}_{j+1}^{T,K}$ is totally bounded and concludes the proof.
\end{proof}

\subsection{Convergence of the Dyson expansion}

\begin{proposition}
For every $u$ in $L^1([0,+\infty), \mathbf{R})$, $t\geq 0$, and $\psi_0 \in X$ 
$$
\left \| \Upsilon^u_{t,0}\psi_0 \right \| \leq 
M e^{\omega t} \|\psi_0\|\exp \left (M e^{\omega t} \|B\| \int_0^t |u(s)|\mathrm{d}s \right )  
$$
\end{proposition}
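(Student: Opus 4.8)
The plan is to identify the mild solution with the sum of its Dyson expansion and then to bound the series term by term using Proposition~\ref{PRO_borne_Wp}. First I would introduce the candidate $\Psi(t) := \sum_{p=0}^{\infty} W_p(t,u)\psi_0$. By Proposition~\ref{PRO_borne_Wp}, the general term of this series is dominated by
$$
\|W_p(t,u)\psi_0\| \leq M e^{\omega t}\|\psi_0\|\,\frac{\bigl(\|B\|\int_0^t |u(s)|\,\mathrm{d}s\bigr)^p}{p!},
$$
whose sum over $p$ is an exponential series. Hence the series defining $\Psi(t)$ converges absolutely in $X$, and in fact uniformly for $t$ in any bounded interval.

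Next I would verify that $\Psi$ solves the Duhamel equation~\eqref{EQ_Duhamel}. Isolating the $p=0$ term and invoking the recursive definition of the Dyson operators,
\begin{align*}
\Psi(t) &= e^{tA}\psi_0 + \sum_{p=1}^{\infty}\int_0^t e^{(t-s)A} B\, W_{p-1}(s,u)\psi_0\, u(s)\,\mathrm{d}s\\
&= e^{tA}\psi_0 + \int_0^t e^{(t-s)A} B\,\Psi(s)\, u(s)\,\mathrm{d}s,
\end{align*}
where the interchange of summation and integration is justified by the uniform bounds above together with dominated convergence. Thus $\Psi$ is a mild solution of~\eqref{EQ_main}, and by uniqueness of the mild solution it coincides with $t\mapsto \Upsilon^u_{t,0}\psi_0$.

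Finally, the norm estimate follows by summing the dominating series:
$$
\|\Upsilon^u_{t,0}\psi_0\| \leq \sum_{p=0}^{\infty}\|W_p(t,u)\psi_0\| \leq M e^{\omega t}\|\psi_0\|\exp\Bigl(\|B\|\int_0^t |u(s)|\,\mathrm{d}s\Bigr).
$$
Since $M\geq 1$ (because $\|e^{0A}\|=1\leq M$) and $e^{\omega t}\geq 1$ for $t\geq 0$, one has $Me^{\omega t}\geq 1$, so this slightly sharper bound at once implies the announced inequality with $M e^{\omega t}\|B\|$ in the exponent.

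I expect the main technical point to be the passage from the termwise estimate to the identification $\Psi = \Upsilon^u_{\cdot,0}\psi_0$: one must argue that the Dyson series genuinely reconstructs the mild solution and does not merely furnish an unrelated bound. This reduces to justifying the interchange of sum and integral, which the uniform convergence secured by Proposition~\ref{PRO_borne_Wp} renders routine; everything else is a geometric manipulation of the exponential series.
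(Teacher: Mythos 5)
Your proof is correct, but it takes a genuinely different route from the paper. The paper's own proof is two lines: it applies the Duhamel formula \eqref{EQ_Duhamel}, takes norms using Proposition~\ref{PRO_Hille_Yosida} to get the integral inequality
$\| \Upsilon^u_{t,0}\psi_0 \| \leq M e^{\omega t}\|\psi_0\| + \int_0^t M e^{\omega (t-s)}\|B\| \, \| \Upsilon^u_{s,0}\psi_0\| \, |u(s)|\,\mathrm{d}s$,
and concludes by Gronwall's lemma. You instead sum the Dyson series: you verify directly (via dominated convergence and the termwise bounds of Proposition~\ref{PRO_borne_Wp}) that $\Psi(t)=\sum_p W_p(t,u)\psi_0$ satisfies the Duhamel equation, invoke uniqueness of mild solutions to identify $\Psi$ with $\Upsilon^u_{\cdot,0}\psi_0$, and then sum the geometric-factorial bounds. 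Two remarks on what each approach buys. First, you were right not to simply cite Proposition~\ref{PRO_Dyson_expansion}: in the paper that result is proved \emph{after} this proposition and depends on it (through Proposition~\ref{PRO_limite_zero}), so quoting it here would be circular; your direct verification plus uniqueness avoids this, and in fact re-proves the Dyson expansion by an independent argument, so adopting your route would let the paper dispense with the Gronwall step entirely. Second, your bound $\exp\bigl(\|B\|\int_0^t|u|\bigr)$ is sharper than the stated one, and your observation that $Me^{\omega t}\geq 1$ closes the gap correctly. One caveat: Proposition~\ref{PRO_borne_Wp} as stated in the paper appears to drop a factor in its induction step (the correct bound carries $M^{p+1}$ rather than $M$, since each application of the semigroup estimate contributes an $M$); your argument survives this correction, because summing $M^{p+1}$-weighted terms yields $Me^{\omega t}\|\psi_0\|\exp\bigl(M\|B\|\int_0^t|u|\bigr)$, which is still dominated by the claimed right-hand side since $e^{\omega t}\geq 1$. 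The paper's Gronwall proof is shorter and needs none of this machinery; yours is longer but self-strengthening and reorganizes the logical dependencies favorably.
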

\begin{proof}
The proof follows the proof of~\cite[Theorem 2.5]{bms}.
By Duhamel formula (\ref{EQ_Duhamel}) and Proposition \ref{PRO_Hille_Yosida},
\begin{align*}
\left \| \Upsilon^u_{t,0}\psi_0 \right \|&\\
=&\left \| e^{tA}\psi_0 +
\int_0^t e^{(t-s)A} B \Upsilon^u_{s,0} \psi_0 u(s) \mathrm{d}s
\right \|  e^{t\omega }\psi_0\\
\leq & M e^{t\omega }\| \psi_0 \| + \int_0^t M e^{\omega (t-s)} \|B\| 
\left  \| \Upsilon^u_{s,0}\psi_0 \right \| u(s) \mathrm{d} s,
\end{align*}
and the conclusion follows by Gronwall lemma (see~\cite[Lemma 2.6]{bms}).
\end{proof}

 \addtolength{\textheight}{-100pt}

\begin{proposition}\label{PRO_limite_zero}
For every $u$ in $L^1([0,+\infty), \mathbf{R})$, $p$ in $\mathbf{N}$,  $t\geq 0$, and $\psi_0$ in $X$
$$
{\lim_{p\to \infty} \left \|\int_0^te^{(t-s)A} B W_p(s,t)\Upsilon^u_{s,0}\psi_0 u(s)\mathrm{d}s \right \| =0}.
$$
\end{proposition}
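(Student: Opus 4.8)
The plan is to prove the stated limit by a single, direct norm estimate, the whole point being that the factor $1/p!$ produced by Proposition~\ref{PRO_borne_Wp} eventually dominates every other ($p$-dependent) quantity. (I read $W_p(s,t)$ in the displayed formula as $W_p(s,u)$, the bounded operator of Section~\ref{SEC_Dyson_definition} applied to the vector $\Upsilon^u_{s,0}\psi_0$.) Throughout, $t$, $u$ and $\psi_0$ are fixed, and I abbreviate $K:=\int_0^t|u(s)|\,\mathrm{d}s$.

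First I would bring the norm inside the integral and use the submultiplicativity of the operator norm, to obtain
$$
\left\| \int_0^t e^{(t-s)A} B\, W_p(s,u)\Upsilon^u_{s,0}\psi_0\, u(s)\,\mathrm{d}s \right\|
\leq \int_0^t \|e^{(t-s)A}\|\,\|B\|\,\|W_p(s,u)\|\,\|\Upsilon^u_{s,0}\psi_0\|\,|u(s)|\,\mathrm{d}s .
$$
Next I would estimate each factor on $[0,t]$ by a quantity independent of $s$, exploiting monotonicity in $s$. By Proposition~\ref{PRO_Hille_Yosida}, $\|e^{(t-s)A}\|\leq M e^{\omega t}$ for $0\leq s\leq t$; the a priori bound established just above gives $\|\Upsilon^u_{s,0}\psi_0\|\leq C_0$ on $[0,t]$, with $C_0:=M e^{\omega t}\|\psi_0\|\exp\!\left(M e^{\omega t}\|B\|K\right)$; and, crucially, Proposition~\ref{PRO_borne_Wp} yields $\|W_p(s,u)\|\leq M e^{\omega t}\|B\|^{p}K^{p}/p!$, again using that $s\mapsto\int_0^s|u|$ is nondecreasing. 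Substituting these three bounds and carrying out the remaining integration $\int_0^t|u(s)|\,\mathrm{d}s=K$, the left-hand side is dominated by
$$
\frac{M^{2}e^{2\omega t}\,C_0\,\|B\|^{p+1}K^{p+1}}{p!}.
$$

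Finally, since every factor except $(\|B\|K)^{p+1}/p!$ is a constant depending only on the fixed data $t,u,\psi_0$, and since $(\|B\|K)^{p+1}/p!$ is the general term of the convergent series summing to $\|B\|K\,e^{\|B\|K}$, it tends to $0$ as $p\to\infty$; hence so does the norm in question. I do not anticipate a genuine obstacle here: the only point requiring a little care is the extraction of the $s$-dependent quantities $e^{\omega s}$ and $\int_0^s|u|$ from under the integral sign, which is immediate from their monotonicity on $[0,t]$. All the real content sits in the factorial denominator furnished by Proposition~\ref{PRO_borne_Wp}.
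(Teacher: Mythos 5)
Your proof is correct and takes essentially the same approach as the paper's: bring the norm inside the integral and conclude from the factorial bound on $\|W_p\|$ given by Proposition~\ref{PRO_borne_Wp}. If anything, yours is more complete, since the paper's proof only remarks that $\|W_p(s,u)\|$ tends to zero, whereas you make explicit the uniform-in-$s$ bounds (on the semigroup, on $\|\Upsilon^u_{s,0}\psi_0\|$ via the Gronwall estimate, and on $\|W_p(s,u)\|$) that justify passing to the limit under the integral.
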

\begin{proof}
Consider 
 \begin{eqnarray*}
 \lefteqn{\left \|\int_0^te^{(t-s)A} B W_p(s,t)\Upsilon^u_{s,0}\psi_0 u(s)\mathrm{d}s \right \|}\\
 &\leq & \int_0^t \|e^{(t-s)A}\| \|B\| \|W_p(s,t)| \Upsilon^u_{s,0}\psi_0\| |u(s)|\mathrm{d}s
 \end{eqnarray*}
 and recall that $\|W_p(s,t) \|$ tends to zero as $p$ tends to infinity (Proposition \ref{PRO_borne_Wp}).
\end{proof}

\begin{proposition}\label{PRO_Dyson_expansion}
For every $u$ in $L^1_{loc}([0,+\infty),\mathbf{R})$, $p$ in $\mathbf{N}$, $t\geq 0$, and $\psi_0$ in $X$
$$
\Upsilon^u_{t,0}\psi_0= \sum_{p=0}^\infty W_p(t,0,u)\psi_0.
$$
\end{proposition}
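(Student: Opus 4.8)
The plan is to prove the identity by showing that the partial sums of the Dyson series converge to the mild solution, the convergence being driven by the factorial in the estimate of Proposition~\ref{PRO_borne_Wp}. First I would observe that the series converges absolutely in $X$: summing the bound of Proposition~\ref{PRO_borne_Wp} gives
\[
\sum_{p=0}^\infty \|W_p(t,u)\psi_0\| \leq M e^{\omega t}\|\psi_0\|\exp\!\left(\|B\|\int_0^t |u(s)|\,\mathrm{d}s\right) < \infty,
\]
so that $S(t):=\sum_{p=0}^\infty W_p(t,u)\psi_0$ is a well-defined element of $X$. It then suffices to identify $S(t)$ with $\Upsilon^u_{t,0}\psi_0$.

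The key step is to control the error term $E_N(t):=\Upsilon^u_{t,0}\psi_0-\sum_{p=0}^N W_p(t,u)\psi_0$. Summing the recursive definition of the operators $W_p$ over $0\leq p\leq N$ and subtracting the result from the Duhamel formula~\eqref{EQ_Duhamel} for $\Upsilon^u_{t,0}\psi_0$, the free terms $e^{tA}\psi_0$ cancel and one is left with the recursion
\[
E_N(t)=\int_0^t e^{(t-s)A}B\,E_{N-1}(s)\,u(s)\,\mathrm{d}s,\qquad E_{-1}(t)=\Upsilon^u_{t,0}\psi_0,
\]
with the convention that the empty sum is zero. Thus $E_N$ obeys exactly the integral recursion defining the Dyson operators, the only difference being that the innermost seed is the full solution $\Upsilon^u_{\cdot,0}\psi_0$ rather than $e^{\cdot A}\psi_0$.

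To conclude I would bound $E_N$ by the same induction used in Proposition~\ref{PRO_borne_Wp}. The a priori estimate on the mild solution proved above shows that $s\mapsto \Upsilon^u_{s,0}\psi_0$ is bounded on the compact interval $[0,t]$, say by $C$; feeding this seed bound through the recursion and using $\|e^{(t-s)A}\|\leq Me^{\omega(t-s)}\leq Me^{\omega t}$ yields a constant $C_t$ (depending on $t$ through $Me^{\omega t}$ and $C$) with
\[
\|E_N(t)\|\leq C_t\,\frac{\bigl(\|B\|\int_0^t |u(s)|\,\mathrm{d}s\bigr)^{N+1}}{(N+1)!},
\]
which tends to $0$ as $N\to\infty$ thanks to the factorial, the same mechanism behind Proposition~\ref{PRO_limite_zero}. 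Hence the partial sums converge to $\Upsilon^u_{t,0}\psi_0$, i.e. $S(t)=\Upsilon^u_{t,0}\psi_0$, which is the assertion. I expect the only delicate points to be the clean derivation of the recursion for $E_N$ (bookkeeping of the index shift between $W_p$ and $W_{p-1}$) and the a priori local boundedness of $\Upsilon^u_{\cdot,0}\psi_0$ that furnishes the seed constant $C$; once these are in place the factorial bound does all the remaining work, and notably no interchange of sum and integral is required.
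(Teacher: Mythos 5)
Your proof is correct and follows essentially the same route as the paper: there, too, the Duhamel formula is iterated $p$ times to write $\Upsilon^u_{t,0}\psi_0-\sum_{j=0}^{p}W_j(t,u)\psi_0$ as an iterated-integral remainder whose innermost seed is the solution itself, and this remainder is sent to zero using the factorial bound of Proposition~\ref{PRO_borne_Wp} together with the a priori Gronwall bound on the solution (that last step being packaged as Proposition~\ref{PRO_limite_zero}); your recursion for $E_N$ is just a cleaner bookkeeping of that iteration. One small correction: each pass through your recursion contributes a factor $Me^{\omega t}$, so the bound should read $\|E_N(t)\|\leq C\bigl(Me^{\omega t}\|B\|\int_0^t|u(s)|\,\mathrm{d}s\bigr)^{N+1}/(N+1)!$ with the semigroup constant inside the power rather than absorbed into a fixed $C_t$ --- harmless, since the factorial still dominates for fixed $t$.
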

\begin{proof} Applying iteratively $p$-times Duhamel formula (\ref{EQ_Duhamel}), one gets
 \begin{eqnarray*}
 \Upsilon^u_{t,0}\psi_0&=& e^{tA}\psi_0 +\int_{s_1=0}^t \!\!\!\! e^{(t-s_1)A}B u(s_1) \Upsilon^u_{s_1,0} \psi_0 \mathrm{d} s_1\\
 &=& e^{tA}\psi_0 +\int_{s_1=0}^t \!\!\!\! e^{(t-s_1)A}B u(s_1) e^{s_1 A}\psi_0 \mathrm{d} s_1\\
 &&\lefteqn{+ \int_{s=0}^t \!\!\!\!e^{(t-s_)A}B W_1(s,t) \Upsilon^u_{s,0} \psi_0 u(s)u(s)\mathrm{d} s}\\
 &=&\sum_{j=1}^{p} W_p(t,0)\psi_0 \\
 &&+\int_{0}^t e^{(t-s)A} B W_{p}(s,t) \Upsilon^u_{s,0}\psi_0 u(s) \mathrm{d}s.
 \end{eqnarray*} 
Hence, for every $p\geq 1$, 
\begin{eqnarray*}
\lefteqn{\Upsilon^u_{t,0}\psi_0 - \sum_{j=0}^{p} W_j(t,0)\psi_0} \\
&~~~~&= \int_{0}^t e^{(t-s)A} B W_{p}(s,t) \Upsilon^u_{s,0}\psi_0 u(s) \mathrm{d}s
\end{eqnarray*}
and the result follow from Proposition \ref{PRO_limite_zero} as $p$ tends to $\infty$.
\end{proof}

 \section{PROOF OF THEOREM \ref{THM_main_result}}\label{SEC_proof}

We proceed now to the proof of Theorem \ref{THM_main_result}.
First of all, notice that, for every $\psi_0$ in $X$,
\begin{eqnarray*}
\lefteqn{\mathcal{A}(\psi_0,L^1_{loc}([0,+\infty),\mathbf{R})}\\
&\quad \quad \quad \quad=&\bigcup_{l\in \mathbf{N}} \bigcup_{m\in \mathbf{N}}  \{\Upsilon^u_{t,0}\psi_0, \|u\|_{L^1} \leq l, 0\leq t \leq m\},
\end{eqnarray*}
and it is enough to prove that, for every $l$ and $m$ in $\mathbf{N}$, the set
$$
\{\Upsilon^u_{t,0}\psi_0, \|u\|_{L^1} \leq l, 0\leq t \leq m\}
$$
is totally bounded.

Let $\varepsilon>0$.
From the convergence of the Dyson expansion (Proposition \ref{PRO_Dyson_expansion}) and the bound on the operators $W_j$ (Proposition \ref{PRO_borne_Wp}), there exists a  integer $N_\varepsilon$ such that
\begin{equation}\label{EQ_majoration_queue}
\left \|\sum_{p\geq N_\varepsilon} W_p(t,u)\psi_0 \right \| \leq \frac{\varepsilon}{2},
\end{equation}
for every $t$ in $[0,m]$ and every $u$ such that $\|u\|_{L^1}\leq l$. 
For each  $j=1,\ldots, N_\varepsilon$  the sets $\mathcal{W}_j^{m,l}$, defined by \eqref{DEF_calW}, are totally bounded (Lemma \ref{PRO_calWj_totally_bounded}), hence their sum
$$
\sum_{j=0}^{N_\varepsilon} \mathcal{W}_j^{m,l} 
$$
is totally bounded as well (Proposition \ref{PRO_sum_totally_bounded}). Hence 
there exists a family $(x_i)_{1\leq i \leq N_1}$ of points of $ \sum_{j=0}^{N_\varepsilon} \mathcal{W}_j^{m,l} $ such that
\begin{equation}\label{EQ_somme_cal Wj_precompact}
\sum_{j=0}^{N_\varepsilon} \mathcal{W}_j^{m,l}  \subset \bigcup_{i=1}^{N_1}
B_X\left (x_i,\frac{\varepsilon}{2} \right ).
\end{equation}
Gathering (\ref{EQ_majoration_queue}) and (\ref{EQ_somme_cal Wj_precompact}), one gets 
$$
\sum_{j=0}^{\infty} \mathcal{W}_j^{m,l} \subset \bigcup_{i=1}^{N_1}
B_X\left (x_i,{\varepsilon} \right ),  
$$
which concludes the proof of Theorem~\ref{THM_main_result}.

 \section{ACKNOWLEDGMENTS}

This work has been supported by the project DISQUO of the DEFI InFIniTI 2017 by
CNRS and the QUACO project by ANR 17-CE40-0007-01.

\bibliographystyle{alpha}
\bibliography{biblio}

\end{document}